\newcommand{\bra}[1]{\langle#1\rangle}
\DeclareMathOperator{\supp}{supp}
\newtheorem{Thm}{Theorem}
\newtheorem{Lm}{Lemma}
\newtheorem{Cor}{Corollary}
\title{Principal Non-singularity of Fourier Matrices on $\mathbb Z_p \times \mathbb Z_q$ and $\mathbb Z_2^k \times \mathbb Z_q$}
\author{Weiqi Zhou\thanks{zwq@xzit.edu.cn}}
\affil{\small School of Mathematics and Statistics, Xuzhou University of Technology \\  {\footnotesize Lishui Road 2, Yunlong District, Xuzhou, Jiangsu Province, China 221111}}
\date{}							
\begin{document}
\maketitle
\begin{abstract}
Let $F_n$ be the $n\times n$ Fourier matrix on the cyclic group $\mathbb Z_n$, a renowned theorem of Chebotar\"ev asserts that all minors in $F_n$ for prime $n$ are non-zero. In this short note it is shown that (i) all principal minors in the Kronecker product $F_p\otimes F_q$ are non-vanishing (principal non-singularity) for distinct odd primes $p,q$ if $q$ is large enough and generates the multiplicative group $\mathbb Z_p^*$; (ii) the Fourier matrix on $\mathbb Z_2^k \times \mathbb Z_q$ is principally non-singular upon permutation (in particular, for $k=1$ the identity permutation suffices) for odd prime $q$ and $k=1,2,3$. The proof is just an exposition of existing techniques reorganized in a unified way. The result will have implications in combining Riesz bases of exponentials. \\

{\noindent 
{\bf Keywords}:  Chebotarev's Theorem, Uncertainty Principle, Principle Minor, Fourier Matrices, Kronecker Product. \\[1ex]
{\bf 2020 MSC}: 42A99, 15A15}
\end{abstract}
 
\section{Introduction}
Let $A$ be a square matrix whose rows and columns are indexed by the same set, and $\mathcal I, \mathcal J$ subsets of the index set. Denote by $A[\mathcal I, \mathcal J]$ the submatrix obtained by taking row indices from $\mathcal I$ and column indices from $\mathcal J$ (If $\mathcal I, \mathcal J$ are singletons then it is the corresponding entry, i.e., $A[i,j]$ means the $i,j$-th entry). A \emph{minor} is the determinant of a submatrix satisfying $|\mathcal I|=|\mathcal J|$. If in addition $\mathcal I=\mathcal J$, then the submatrix is called \emph{principal} and its determinant is called a \emph{principal minor}. We shall say that a matrix is \emph{principally non-singular} if all its principal minors are non-zero, and \emph{principally non-singular upon permutation} if there is a column permutation so that all principal minors in the permuted matrix are non-zero.

Let $\mathbb Z_n=\{0,1,\ldots, n-1\}$ be the additive cyclic group of order $n$, and set $\zeta_n=e^{2\pi i/n}$. The \emph{Fourier matrix} on the finite Abelian group $\mathbb Z_{n_1}\times\ldots\times \mathbb Z_{n_d}$ is the matrix indexed by $\mathbb Z_{n_1}\times\ldots\times \mathbb Z_{n_d}$ with its $j,k$-th entry given by $\prod_{\ell=1}^d\zeta_{n_{\ell}}^{j_{\ell}k_{\ell}}$ where both $j=(j_1,\ldots,j_d)$ and $k=(k_1,\ldots,k_d)$ are in $\mathbb Z_{n_1}\times\ldots\times \mathbb Z_{n_d}$. It is typical to further attach the factor $n^{-1/2}$ with $n=n_1\ldots n_d$ to make the matrix unitary, but in this article it is more convenient not to include this scaling factor. When $d=1$, it is then the usual $n\times n$ DFT matrix scaled by $\sqrt n$ and we denote it by $F_n$. Therefore $F_n$ can also be viewed as an $n\times n$ Vandermonde matrix on $n$-th roots of unity.

A celebrated theorem of Chebotar\"ev (see e.g., \cite{stevenhagen1996}) states that if $p$ is a prime number, then all minors in $F_p$ are non-zero. For composite $n$ this is in general not true, for example if $n=4$ and $\mathcal I=\{0,2\}$, then all entries in the principal submatrix $F_4[\mathcal I, \mathcal I]$ are identically $1$. This theorem is also equivalent to the enhanced additive (compared to the usual productive uncertainty principle as in \cite{donoho1989, smith1990}) uncertainty principle (see \cite{tao2005}) that $|\supp(\vec x)|+|\supp(F_p\vec x)|\ge p+1$, where $\supp(\vec x)=\{k\in\mathbb Z_p: x_k\neq 0\}$ is the \emph{support} of $\vec x$.  There are also other additive improvements of the uncertainty inequality for composite $n$, see, e.g., \cite{meshulam2006}, but to translate uncertainty inequalities to principal non-singularity of Fourier matrices, an additive inequality on subvectors in the form of \cite[Proposition 2]{loukaki2025} will be needed.

There have been efforts on understanding whether permuted Fourier matrices can be principally non-singular, this is primarily motivated by classical questions in sampling theory such as: combining Riesz exponential bases on disjoint sets to form a new Riesz exponential basis on the union of these sets, or conversely how can we split a Riesz exponential basis on a set to get Riesz exponential bases on each individual component in a partition of the original set. In the introductory part of e.g., \cite{kozma2015, pfander2024} there are rich examples showing the complexity of this problem. A \emph{Riesz basis} is the image of an orthonormal basis under a bounded (away from both $\infty$ and $0$) linear operator. In the absence of an orthogonal exponential basis (e.g., triangles and disks admit no orthogonal Fourier bases \cite{fuglede1974}), a Riesz exponential basis would be a good choice for analyzing and reconstructing functions. 

Denote the multiplicative group modulo $n$ by $\mathbb Z_n^*$. In simpler settings such as when the underlying set is a finite collection of intervals with rational end points, then the possibility of combining or splitting Riesz exponential bases will rely critically on the principal non-singularity of $F_n$ upon permutations. The conjecture that $F_n$ is principally non-singular upon permutation (in particular it is conjectured that the identity permutation shall suffice if $n$ is square free) is thereof posed in the recent work \cite{cabrelli2025, caragea2025}. This is supported by numerical experiments up to $n=30$ and proved for $n=2p$ and $n=qr$ in \cite{loukaki2025} for odd primes $p,q,r$ with $q$ large enough and generating $\mathbb Z_r^*$. It is also known (see e.g., \cite{barnett2022}) that as $n$ grows the smallest non-zero principal minor in $n^{-1/2}F_n$ (scaled to be unitary) decay exponentially (e.g., the minimum principal minor is about $10^{-11}$ in for $n=30$), which makes them difficult to detect numerically. Principal non-singularity of a Fourier matrix on a finite Abelian group $G$ can also be interpreted as a form of uncertainty principle that vectors on $G$ and their Fourier transforms can not have disjoint supports.

Let $A\in\mathbb C^{m\times n}$ and $B\in\mathbb C^{m'\times n'}$, the Kronecker product $A\otimes B$ is an $mm'\times nn'$ matrix defined as
\begin{equation} \label{EqDefK}
A\otimes B=\begin{pmatrix}
A[0,0]B & \ldots & A[0, n-1]B \\
\vdots & \ddots & \vdots \\
A[m-1,0]B & \ldots & A[m-1, n-1]B
\end{pmatrix}.
\end{equation}

Since any $k\in\mathbb Z_{mn}$ can be uniquely written as $k=q_kn+r_k$ for some $q_k\in\mathbb Z_m$ and some $r_k\in\mathbb Z_n$, from \eqref{EqDefK} it is clear that $F_m\otimes F_n$ is the Fourier matrix on $\mathbb Z_m\times \mathbb Z_n$ with the index correspondence being that the $j,k$-th entry in $F_m\otimes F_n$ corresponds to the  $(q_j,q_k),(r_j,r_k)$-th entry in the Fourier matrix on $\mathbb Z_m\times \mathbb Z_n$. This also shows that $F_m\otimes F_n$ only differs from $F_n\otimes F_m$ by a simultaneous permutation. Moreover, if $m,n$ are co-primes, then $\mathbb Z_{mn}\cong \mathbb Z_m\times\mathbb Z_n$ implies that $F_m\otimes F_n$ also differs from $F_{mn}$ by row and column (not necessarily simultaneous) permutations in such case.

The purpose of this short note is to show in Corollary \ref{CorPQ} and Corollary \ref{Cor2Q} respectively that $F_p\otimes F_q$ is principally non-singular for distinct odd primes $p,q$ if $q$ is large enough and generates $Z_p^*$, and that $\underbrace{F_2 \otimes\ldots\otimes F_2}_{k \text{ terms}}\otimes F_q$ is principally non-singular upon permutation (in particular, for $k=1$ the identity permutation suffices) for odd prime $q$ and $k=1,2,3$. The proof is essentially the same as in \cite{frenkel2003, loukaki2025} with the method summarized and unified in Theorem \ref{ThmMain}. 
  
\section{Main Results}
The following lemma is originally stated in \cite[Lemma 2]{frenkel2003} for polynomials on finite fields of prime orders, it actually holds in many cases (e.g., it is true in $\mathbb C[x]$) since all we need for it to hold is just a polynomial ring with a well defined notion of non-vanishing formal derivatives that satisfies the product rule and allows factorization into linear factors, hence we state it for polynomials on integral domains for most generality (see e.g., \cite[Chapter III-6, Theorem 6.7-Theorem 6.10]{hungerford1974}).
\begin{Lm}\cite[Lemma 2]{frenkel2003} \label{LmF1}
Let $D$ be an integral domain of characteristic $q$, and $p(x)\in D[x]$ a polynomial. Suppose that $a\neq 0$ is a root of $p(x)$ with multiplicity $m$, and $h$ is the number of non-zero coefficients of $p(x)$, then $m<h$ holds if one of the following two conditions holds:
\begin{itemize}[leftmargin=*]
\item $q>0$ and the order of $p(x)$ is less than $q$;
\item $q=0$.
\end{itemize}
\end{Lm}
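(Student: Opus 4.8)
The plan is to induct on $h$, the number of non-zero coefficients of $p(x)$. The base case is $h=1$: then $p(x)=cx^{k}$ with $c\neq 0$, whose only root is $0$, so the hypothesis $a\neq 0$ forces $m=0<1=h$.

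For the inductive step I would first reduce to the case $p(0)\neq 0$. Factoring out the largest power of $x$, write $p(x)=x^{s}g(x)$ with $g(0)\neq 0$; since $a\neq 0$ and $D$ is a domain, this changes neither $h$ nor the multiplicity $m$ of $a$, and it does not increase the degree, so whichever of the two alternatives holds for $p$ still holds for $g$. Next, differentiate. Assuming $m\geq 1$, write $g(x)=(x-a)^{m}r(x)$ with $r(a)\neq 0$; the product rule gives $g'(x)=(x-a)^{m-1}\bigl(m\,r(x)+(x-a)r'(x)\bigr)$, and the bracket evaluated at $a$ equals $m\,r(a)$. The key point --- and the only place the characteristic enters --- is that $m\,r(a)\neq 0$: in characteristic $0$ this is automatic for $m\geq 1$, while in characteristic $q>0$ with $\deg p<q$ (the order hypothesis) one has $1\leq m\leq\deg g\leq\deg p<q$, so $m\cdot 1_{D}\neq 0$, and since $D$ is a domain (hence of characteristic $0$ or prime, which is what makes that implication valid) $m\,r(a)\neq 0$. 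Therefore $a$ is a root of $g'$ of multiplicity exactly $m-1$. (If $m=0$ there is nothing to prove, as $h\geq 1$; note also that $m\geq 1$ already forces $h\geq 2$, since a monomial has only $0$ as a root.)

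The second ingredient is a coefficient count: $g'$ has exactly $h-1$ non-zero coefficients. Writing $g(x)=\sum_{j}c_{j}x^{j}$ with exactly $h$ indices $j$ satisfying $c_{j}\neq 0$, one of which is $j=0$ because $g(0)\neq 0$, differentiation annihilates the constant term and sends each remaining $c_{j}x^{j}$ with $j\geq 1$ to $j c_{j}x^{j-1}$, where $j c_{j}\neq 0$ --- again either because the characteristic is $0$ or because $1\leq j\leq\deg g<q$. The exponents $j-1$ are pairwise distinct, so no cancellation occurs, and $\deg g'<\deg g\leq\deg p$ keeps the order condition intact. Applying the induction hypothesis to $g'$, which has $h-1$ non-zero coefficients and the non-zero root $a$ of multiplicity $m-1$, yields $m-1<h-1$, i.e.\ $m<h$.

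I expect the only real subtlety to be the careful bookkeeping around the characteristic: simultaneously needing $m\cdot 1_{D}\neq 0$ and $j\cdot 1_{D}\neq 0$ for every exponent $j\geq 1$ that occurs, and recognizing that these are exactly what the two listed alternatives guarantee (together with the domain property ruling out intermediate characteristics). Everything else --- the reduction to $p(0)\neq 0$, the derivative identity, and the exponent-disjointness of $g'$ --- is routine.
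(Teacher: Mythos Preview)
Your proof is correct and follows essentially the same approach as the paper: reduce to a polynomial with non-zero constant term, differentiate, observe that both the multiplicity of $a$ and the number of non-zero coefficients drop by exactly one, and conclude by induction. The only cosmetic difference is that you induct on $h$ while the paper inducts on the degree; the mechanics are otherwise identical, and your treatment of the edge cases ($m=0$, $m=1$) and of the characteristic bookkeeping is in fact slightly more explicit than the paper's.
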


\begin{proof}
We may without loss of generality assume that the constant term of $p(x)$ is not $0$, otherwise we may simply divide the lowest term out. The proof is then by induction on the order of polynomials. For linear polynomials the statement trivially holds. Suppose it holds for all polynomials of order up to $k-1$, and consider now an order $k$ polynomial $p(x)$. Since $D$ is an integral domain, not only can we write $p(x)=(x-a)^mr(x)$ for some polynomial $r(x)$ with $r(a)\neq 0$, but also we can apply the product rule to compute its formal derivative to be $p'(x)=(x-a)^{m-1}\left(mr(x)+(x-a)r'(x)\right)$, and verify that the multiplicity of $a$ as a root of $p'(x)$ is $m-1$. Indeed, the term $m(x-a)^{m-1}r(x)$ is non-vanishing since either the characteristic is $0$ or $m\le k<q$. Since $p(x)$ has a non-zero constant term, the number of non-zero coefficient of $p'(x)$ is precisely $h-1$, thus by the induction assumption we get $m-1<h-1$, which leads to $m<h$ as desired.
\end{proof}

Remark: The arguments may appear to be trivial at first glance, but it is important to carry it out on integral domains, otherwise the factorization may not be well defined.

\begin{Thm} \label{ThmMain}
Let $m,n\in\mathbb N$, and suppose that $F$ is a matrix in which every entry is an integer power of $\zeta_m$, then $F\otimes F_{n}$ is principally non-singular if both conditions below are satisfied:
\begin{enumerate}[leftmargin=*, label=(\roman*)]
\item The ideal $\bra{1-\zeta_n}$ is prime in $\mathbb Z[\zeta_{mn}]$, and the characteristic of $\mathbb Z[\zeta_{mn}]/\bra{1-\zeta_n}$ is either $0$ or at least $n$;  \label{Cond1}
\item $f(F)$ is principally non-singular in $\mathbb Z[\zeta_{mn}]/\bra{1-\zeta_n}$, where $f$ is the canonical epimorphism from $\mathbb Z[\zeta_{mn}]$ to $\mathbb Z[\zeta_{mn}]/\bra{1-\zeta_n}$.  \label{Cond2}
\end{enumerate}
\end{Thm}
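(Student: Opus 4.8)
The plan is to fix a principal submatrix $M=(F\otimes F_n)[S,S]$ and show that $\det M\neq 0$ in $R:=\mathbb Z[\zeta_{mn}]$. Labelling the rows and columns of $F\otimes F_n$ by pairs $(i,a)$ with $i$ an index of $F$ and $a\in\mathbb Z_n$ (as in the discussion following \eqref{EqDefK}), one has $M[(i,a),(j,b)]=F[i,j]\zeta_n^{ab}$, and since every $F[i,j]$ is a power of $\zeta_m=\zeta_{mn}^{\,n}$ while $\zeta_n=\zeta_{mn}^{\,m}$, all entries of $M$ lie in $R$. Because $M$ is principal, $S=\bigsqcup_{i\in I'}\{i\}\times A_i$ for some index set $I'$ and sets $A_i\subseteq\mathbb Z_n$; put $n_i=|A_i|$. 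By \ref{Cond1}, $\mathfrak p:=\bra{1-\zeta_n}$ is a nonzero prime of the Dedekind domain $R$, hence maximal, so $R_{\mathfrak p}$ is a discrete valuation ring with uniformizer $\pi:=1-\zeta_n$, valuation $v$, and residue field $\bar R=R/\mathfrak p$ of characteristic $0$ or at least $n$. Condition \ref{Cond1} also yields the three facts used throughout: $v(a)=0$ for $1\le a\le n-1$, whence $v(\zeta_n^a-1)=v(\pi)=1$ (because $(\zeta_n^a-1)/(\zeta_n-1)=1+\zeta_n+\dots+\zeta_n^{a-1}\equiv a\not\equiv 0$ is a unit); $v(\ell!)=0$ for $0\le\ell<n$; and the reduction $f$ is injective on $\{0,1,\dots,n-1\}$. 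The whole strategy is to show $v(\det M)<\infty$ by first stripping off the $\pi$-power content of $M$ by column operations and then reducing modulo $\mathfrak p$ — exactly the mechanism of Frenkel's proof of Chebotarev's theorem.

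First I would perform, inside each block-column $j$, the column operation that replaces the columns indexed by $b\in A_j$ by the combinations $\hat C^{(j)}_\ell$, $0\le\ell<n_j$, whose coefficient matrix is the inverse of the Vandermonde $(b^t)_{b\in A_j,\,0\le t<n_j}$; since the latter has determinant a unit of $R_{\mathfrak p}$ (a product of integers $b'-b$ with $0<b'-b<n$), this changes $\det M$ only by a nonzero factor and keeps everything over $R_{\mathfrak p}$. Expanding $\zeta_n^{ab}=(1-\pi)^{ab}=\sum_s\binom{ab}{s}(-\pi)^s$ and using that $\binom{ab}{s}$ is a polynomial of degree $s$ in $ab$ with leading coefficient $1/s!$, the entry of $\hat C^{(j)}_\ell$ in row $(i,a)$ comes out as $F[i,j]\bigl(\tfrac{(-1)^\ell a^{\ell}}{\ell!}\pi^{\ell}+O(\pi^{\ell+1})\bigr)$, with unit leading coefficient by the facts above (the case $a=0$ giving simply $F[i,j]\delta_{\ell 0}$, consistent with $0^0=1$). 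Dividing $\pi^\ell$ out of each sub-column $(j,\ell)$, reducing modulo $\mathfrak p$, and scaling the column $(j,\ell)$ by the nonzero constant $(-1)^\ell/f(\ell!)$, one is reduced to proving
\[
\det N\neq 0,\qquad N:=\bigl(f(F[i,j])\,f(a)^{\ell}\bigr)_{\,(i,a)\in S,\ (j,\ell):\ j\in I',\ 0\le\ell<n_j}.
\]
The point is that $N=(G\otimes V)[\mathcal R,\mathcal C]$, a column-restriction of a Kronecker product, with $G:=f(F)[I',I']$, with $V:=\bigl(f(a)^{\ell}\bigr)_{a,\ell\in\{0,\dots,n-1\}}$ the full Vandermonde on the distinct points $f(0),\dots,f(n-1)$, with $\mathcal R:=\{(i,a):a\in A_i\}$ and $\mathcal C:=\{(j,\ell):\ell<n_j\}$; note $|\mathcal R|=|\mathcal C|=|S|$.

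Then I would invoke \ref{Cond2}. Fix an ordering of $I'$ in which $j\mapsto n_j$ is non-increasing. Since every principal minor of $f(F)$ is nonzero, so is every leading principal minor of $G$ in this ordering, hence $G$ has an LU-factorization $G=LU$ with $L$ unit lower-triangular and $U$ upper-triangular with every $U_{ii}\neq 0$. From $G\otimes V=(L\otimes V)(U\otimes I_n)$ one checks that for $(i,\gamma)$ with $\gamma\ge n_i$ every entry of the row $(i,\gamma)$ of $U\otimes I_n$ indexed by $\mathcal C$ vanishes (a nonzero entry would need $\gamma=\ell<n_j$, forcing $n_j>n_i$ and hence $U_{ij}=0$); so after restricting columns to $\mathcal C$ only the rows $\mathcal K_0:=\{(i,\gamma):\gamma<n_i\}$ of $U\otimes I_n$ survive, and $N=(L\otimes V)[\mathcal R,\mathcal K_0]\cdot(U\otimes I_n)[\mathcal K_0,\mathcal C]$ displays $N$ as a product of two $|S|\times|S|$ matrices. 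Grouping rows and columns by their shared second coordinate shows $(U\otimes I_n)[\mathcal K_0,\mathcal C]$ is block-diagonal with blocks the leading principal submatrices of $U$, so its determinant is $\pm\prod_{i\in I'}U_{ii}^{\,n_i}\neq 0$; grouping by the block index $I'$ shows $(L\otimes V)[\mathcal R,\mathcal K_0]$ is block lower-triangular (the $(i,k)$-block carries the scalar $L_{ik}$, zero when $i$ precedes $k$) with diagonal blocks the square Vandermondes $(f(a)^\ell)_{a\in A_i,\,0\le\ell<n_i}$, so its determinant is $\pm\prod_{i\in I'}\prod_{a<a'\in A_i}\bigl(f(a')-f(a)\bigr)\neq 0$. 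Hence $\det N\neq 0$, and tracing the reductions back, $\det M\neq 0$.

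The one genuinely delicate point is the passage from $M$ to $N$: the column operations must be arranged so that what survives reduction modulo $\mathfrak p$ is visibly the column-restriction $(G\otimes V)[\mathcal R,\mathcal C]$ of a Kronecker product, and one must then recognize that hypothesis \ref{Cond2} is exactly the statement that $G$ admits an LU-factorization — it is this that makes the a priori unwieldy Cauchy–Binet expansion of $\det\bigl((G\otimes V)[\mathcal R,\mathcal C]\bigr)$ collapse onto the single index set $\mathcal K_0$. Everything else is $\pi$-adic bookkeeping, in spirit identical to the arguments of \cite{frenkel2003,loukaki2025}, and for $F$ the $1\times 1$ matrix the argument degenerates to Frenkel's proof of Chebotarev's theorem.
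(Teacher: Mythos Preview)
Your argument is correct, but it is organized quite differently from the paper's own proof. The paper argues on the kernel side: assuming $(F\otimes F_n)[\mathcal K,\mathcal K]\vec z=0$ with $\vec z$ not entirely divisible by $1-\zeta_n$, it packages the entries of $\vec z$ into polynomials $S_j(x)$, forms $T_i(x)=\sum_j F[i,j]S_j(x)$, reduces modulo $\langle1-\zeta_n\rangle$ so that each $\tilde T_i$ acquires $(x-1)^{|\mathcal I_i|}$ as a factor, inverts $\tilde F[\mathcal I,\mathcal I]$ to write $\tilde S_j$ back in terms of the $\tilde T_i$, and then applies Lemma~\ref{LmF1} (multiplicity of a root is strictly less than the number of nonzero coefficients) in an iterative elimination ordered by the sizes $|\mathcal I_i|$ to force every $\tilde S_j$ to vanish, a contradiction. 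Each step of that iteration silently re-invokes condition~\ref{Cond2} on a smaller principal submatrix.

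You instead work directly on the determinant side: Vandermonde column operations inside each block-column strip off the exact power of $\pi$ from $\det M$, and what survives modulo $\mathfrak p$ is the rectangular slice $N=(G\otimes V)[\mathcal R,\mathcal C]$. Your key structural observation is that condition~\ref{Cond2} is precisely the hypothesis that $G=f(F)[I',I']$ admits an LU factorization in \emph{every} ordering, which lets you choose the $n_j$-monotone ordering and collapse the product $(L\otimes V)(U\otimes I_n)$, restricted to $\mathcal R\times\mathcal C$, onto the single intermediate index set $\mathcal K_0$; the two square factors are then visibly block-triangular with nonzero (Vandermonde, respectively leading-principal-of-$U$) diagonal blocks. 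This is a genuine alternative to the paper's route: it avoids Lemma~\ref{LmF1} entirely, replaces the paper's inductive peeling by a one-shot LU factorization, and as a bonus yields the exact valuation $v(\det M)=\sum_{i\in I'}\binom{n_i}{2}$, which the paper's kernel argument does not see. Conversely, the paper's proof stays closer to Frenkel's original polynomial mechanism and requires less explicit $\pi$-adic bookkeeping.
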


\begin{proof}
Let $G$ be the index set of rows and columns in $F$, we will use $G\times \mathbb Z_n$ to index entries in $F\otimes F_n$. The purpose is to show that $(F\otimes F_n)[\mathcal K, \mathcal K]$ is invertible for any given index subset $\mathcal K\subseteq G\times \mathbb Z_n$. First we shall align some notations:

Set
$$\mathcal I=\{i: \left((i,i'),(j,j')\right)\in\mathcal K\times\mathcal K \}.$$
Intuitively $i\in\mathcal I$ means that $(F\otimes F_n)[\mathcal K, \mathcal K]$ contains some rows from the block 
$$\begin{pmatrix}F[i,0]F_n, & \ldots, & F[i,|G|-1]F_n\end{pmatrix}.$$
For each fixed $i\in G$, further set
$$\mathcal I_i=\{i': \left((i,i'),(j,j')\right)\in\mathcal K\times\mathcal K\},$$
so that $\mathcal I_i$ contains row indices from the above block that are active in $(F\otimes F_n)[\mathcal K, \mathcal K]$. 

Similarly we will use 
$$\mathcal J=\{j: \left((i,i'),(j,j')\right)\in\mathcal K\times\mathcal K\}, \quad \mathcal J_j=\{j': \left((i,i'),(j,j')\right)\in\mathcal K\times\mathcal K\},$$
respectively for active vertical blocks and active column indices in corresponding vertical blocks. Since $(F\otimes F_n)[\mathcal K, \mathcal K]$ is a principal submatrix we certainly have $\mathcal I=\mathcal J$, and $\mathcal I_i=\mathcal J_j$ if $i=j$. 

Assume the contrary that $F\otimes F_{n}$ is principally singular instead, then there is some $\mathcal K$ such that $(F\otimes F_n)[\mathcal K, \mathcal K]$ is not invertible. We may view $(F\otimes F_n)[\mathcal K, \mathcal K]$ as a linear operator on $\left(\mathbb Q(\zeta_{mn})\right)^{|\mathcal K|}$, then there is some non-zero $\vec z\in \left(\mathbb Q(\zeta_{mn})\right)^{|\mathcal K|}$ so that 
$$(F\otimes F_n)[\mathcal K, \mathcal K]\vec z=0.$$ 
Clearly by scaling we can take $\vec z\in\left(\mathbb Z[\zeta_{mn}]\right)^{|\mathcal K|}$ instead. Moreover, if $\vec z$ is not identically zero, then we may without loss of generality assume that there is at least one entry in it that is not divisible by $1-\zeta_n$, otherwise we may simply factor $1-\zeta_n$ out from $\vec z$. 

For each $j\in\mathcal J$, let $\vec z^{(j)}\in\left(\mathbb Z(\zeta_{mn})\right)^{|\mathcal J_j|}$ be the subvector of $\vec z$ obtained by keeping only entries whose indices are in $\mathcal J_j$, then
$$(F\otimes F_n)[\mathcal K, \mathcal K]\vec z=0 \quad \Leftrightarrow \quad \sum_{j\in\mathcal J}F[i,j]F_n\vec z^{(j)}=0, \forall i\in\mathcal I.$$ 
Since $F_n$ is a Vandermonde matrix, for each $j\in\mathcal J$ we may further define the polynomial $S_j(x)\in \mathbb Z[\zeta_{mn}][x]$ to be
$$S_j(x)=\sum_{k\in\mathcal J_j}z^{(j)}_kx^k,$$
where $z^{(j)}_k$ is the entry indexed by $k$ in $z^{(j)}$, then since $\mathcal J_j\subseteq\mathbb Z_n$ by construction, each $S_j(x)$ is a polynomial of order at most $n-1$, and we get
$$\sum_{j\in\mathcal J}F[i,j]F_n\vec z^{(j)}=0, \forall i\in\mathcal I \quad \Leftrightarrow \quad \sum_{j\in\mathcal J}F[i,j]S_j(\zeta_n^{\ell})=0, \forall i\in\mathcal I, \forall \ell\in\mathcal I_i.$$
Now for each $i\in\mathcal I$, define the polynomial $T_i(x)\in \mathbb Z[\zeta_{mn}][x]$ to be
$$T_i(x)=\sum_{j\in\mathcal J}F[i,j]S_j(x),$$
and let $\vec T(x)$ and $\vec S(x)$ be vectors formed by $T_i(x)$ and $S_j(x)$ respectively, then (recall that $\mathcal I=\mathcal J$) we obtain
$$\vec T(x)=F[\mathcal I, \mathcal J]\vec S(x)=F[\mathcal I, \mathcal I]\vec S(x),$$
and each $T_i(x)$ vanishes on $\zeta_n^{\ell}$ for all $\ell\in\mathcal I_i$.   

Set $D=\mathbb Z[\zeta_{mn}]/\bra{1-\zeta_n}$, then \ref{Cond1} implies that $D$ is an integral domain. Apply the canonical epimorphism $f: \mathbb Z[\zeta_{mn}]\to D$ to coefficients of each $T_i(x)$, we obtain a new polynomial $\tilde T_i(x)\in D[x]$. Since $\zeta_n=-(1-\zeta_n)+1$ we see that $f(\zeta_n)=1$, therefore $f(\zeta_n^k)=f(\zeta_n)^k=1$ for all $k\in\mathbb Z_n$. Now $T_i(x)$ vanishing on $\zeta_n^{\ell}$ for all $\ell\in\mathcal I_i$ implies that it is divisible $\prod_{\ell\in\mathcal I_i}(x-\zeta_n^{\ell})$, consequently $\tilde T_i(x)$ is divisible by $(x-1)^{|\mathcal I_i|}$. 

Let $\tilde S_j(x), \tilde S(x), \tilde T(x), \tilde F$ be the images of $S_j(x), \vec S(x), \vec T(x), F$ under $f$ respectively. By \ref{Cond2} we also have that
$$\tilde S(x)=(\tilde F[\mathcal I, \mathcal I])^{-1}\tilde T(x).$$
For each $i\in\mathcal I$ and each $j\in\mathcal J$, denote the entry $(\tilde F[\mathcal I, \mathcal I])^{-1}[j,i]$ (this is well defined since $\mathcal J=\mathcal I$) by $\tilde c_{j,i}$, then we have
$$\tilde S_j(x)=\sum_{i\in\mathcal I}\tilde c_{j,i}\tilde T_i(x). $$
Now let $i_0, i_1,\ldots i_{|\mathcal I|-1}$ be an enumeration of the set $\mathcal I$ with the property that 
$$|\mathcal I_{i_0}|\le |\mathcal I_{i_1}|\le \ldots \le |\mathcal I_{i_{|\mathcal I|-1}}|,$$ 
and set $j_0=i_0, j_1=i_1, \ldots, j_{|\mathcal J|-1}=i_{|\mathcal I|-1}$ (This is well defined since $\mathcal I=\mathcal J$). Then the polynomial $\tilde S_{j_0}(x)$ has $1$ as a root of multiplicity $|I_{i_0}|$, but by construction it has at most $|J_{j_0}|=|I_{i_0}|$ non-zero coefficients, therefore by Lemma \ref{LmF1} this indicates that $\tilde S_{j_0}(x)$ is identically $0$. Repeat the procedure on $\mathcal I\setminus\{i_0\}$, and $\mathcal J\setminus\{j_0\}$, we assert that $\tilde S_{j_1}(x)$ is identically $0$, and so forth $\tilde S_{j_2}(x), \ldots \tilde S_{j_{|\mathcal J|-1}}(x)$ are all identically $0$. This means for every every $j\in\mathcal J$ and every $k\in\mathcal J_j$, the term $z^{(j)}_k$ is divisible by $1-\zeta_n$, i.e., every entry of $\vec z$ is divisible by $1-\zeta_n$. This contradicts the assumption that at least one entry of $\vec z$ shall be not divisible by $1-\zeta_n$. Therefore such $\mathcal K$ does not exist, and $F\otimes F_n$ has to be principally non-singular.
\end{proof}

Assumption \ref{Cond1} of Theorem \ref{ThmMain} ensures that reduction modulo $1-\zeta_n$ yields an integral domain, allowing us to apply Lemma \ref{LmF1}, and carry out multiplicity arguments without zero divisors. By Lemma 2 below, this condition is also equivalent to $n$ being a primitive root modulo $m$.

Let $\varphi$ be the Euler totient function, and $\mathbb F_{p^k}$ the finite field of $p^k$ elements. It is well known (see e.g., \cite[Corollary 2.1.25]{cohen2007}) that $\mathbb Z_m^*$ is cyclic if and only if $m\in\{1,2,4,p^k, 2p^k: p \text{ odd prime}, k\in\mathbb N\}$. 

\begin{Lm}\cite[Lemma 3]{loukaki2025} \label{LmL1}
Let $q$ be an odd prime and $m\in\mathbb N$ with $\gcd(m,q)=1$, then the ideal $\bra{1-\zeta_q}$ is prime in $\mathbb Z[\zeta_{mq}]$ if and only if $q$ generates $\mathbb Z_m^*$ (hence $\mathbb Z_m^*$  is cyclic). Moreover, in such case 
$$\mathbb Z[\zeta_{mq}]/\bra{1-\zeta_q}\cong \mathbb F_{q^{\varphi(m)}}.$$ 
\end{Lm}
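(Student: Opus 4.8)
The plan is to compute the quotient ring $\mathbb Z[\zeta_{mq}]/\langle 1-\zeta_q\rangle$ explicitly: I will exhibit it as a finite product of copies of a single finite field, so that it is an integral domain (equivalently, $\langle 1-\zeta_q\rangle$ is prime) precisely when there is exactly one factor, which turns out to be exactly the condition that $q$ generates $\mathbb Z_m^*$. Throughout I write $\Phi_\ell$ for the $\ell$-th cyclotomic polynomial and use freely the textbook facts that $\mathbb Z[\zeta_\ell]$ is the ring of integers of $\mathbb Q(\zeta_\ell)$ and that $\mathbb Z[\zeta_\ell]\cong\mathbb Z[x]/\langle\Phi_\ell(x)\rangle$.

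First I would identify $\mathbb Z[\zeta_{mq}]$ with $\mathbb Z[\zeta_m][y]/\langle\Phi_q(y)\rangle$. Since $\gcd(m,q)=1$ we have $[\mathbb Q(\zeta_{mq}):\mathbb Q(\zeta_m)]=\varphi(q)=q-1=\deg\Phi_q$, so $\Phi_q$ is the minimal polynomial of $\zeta_q$ over $\mathbb Q(\zeta_m)$; because it is monic, the division algorithm works inside $\mathbb Z[\zeta_m][y]$ and shows the kernel of the surjection $\mathbb Z[\zeta_m][y]\to\mathbb Z[\zeta_m,\zeta_q]=\mathbb Z[\zeta_{mq}]$, $y\mapsto\zeta_q$, is exactly $\langle\Phi_q(y)\rangle$ (here $\mathbb Z[\zeta_m,\zeta_q]=\mathbb Z[\zeta_{mq}]$ again uses $\gcd(m,q)=1$). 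Under this identification $\langle 1-\zeta_q\rangle$ becomes $\langle 1-y\rangle$, hence, setting $y=1$ and using $\Phi_q(1)=q$,
\[
\mathbb Z[\zeta_{mq}]/\langle 1-\zeta_q\rangle\;\cong\;\mathbb Z[\zeta_m][y]/\langle\Phi_q(y),\,1-y\rangle\;\cong\;\mathbb Z[\zeta_m]/q\,\mathbb Z[\zeta_m]\;\cong\;\mathbb F_q[x]/\langle\overline{\Phi_m}(x)\rangle,
\]
where $\overline{\Phi_m}$ is $\Phi_m$ reduced modulo $q$. Because $q\nmid m$, the polynomial $x^m-1$, and a fortiori $\Phi_m$, is separable over $\mathbb F_q$, and the Frobenius-orbit description of the primitive $m$-th roots of unity in $\overline{\mathbb F_q}$ shows $\overline{\Phi_m}$ is a product of $\varphi(m)/f$ distinct monic irreducibles each of degree $f:=\mathrm{ord}_m(q)$. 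By the Chinese Remainder Theorem the quotient is therefore $\cong\bigl(\mathbb F_{q^{f}}\bigr)^{\varphi(m)/f}$. Since $\langle 1-\zeta_q\rangle$ is a proper nonzero ideal (its norm is a power of $q$), it is prime iff this ring is a domain, iff there is one factor, iff $f=\varphi(m)$; and the last condition says exactly that $q$ has order $\varphi(m)$ in $\mathbb Z_m^*$, i.e.\ generates it (which forces $\mathbb Z_m^*$ to be cyclic). In that case the quotient is the field $\mathbb F_{q^{\varphi(m)}}$, which is the ``moreover'' part.

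The routine inputs are the two standard cyclotomic facts and the mod-$q$ factorization of $\Phi_m$. The one point needing genuine care is the first identification $\mathbb Z[\zeta_{mq}]\cong\mathbb Z[\zeta_m][y]/\langle\Phi_q(y)\rangle$: one must verify the evaluation kernel is generated by $\Phi_q(y)$ and nothing larger, which is where $\gcd(m,q)=1$ enters (through the degree equality keeping $\Phi_q$ irreducible over $\mathbb Q(\zeta_m)$) and where monicity of $\Phi_q$ lets polynomial division run over the possibly non-UFD ring $\mathbb Z[\zeta_m]$. An equivalent route, which I would add as a remark, is to invoke the splitting of the rational prime $q$ in $\mathbb Q(\zeta_{mq})$: $q$ is unramified in $\mathbb Q(\zeta_m)$ and totally ramified in $\mathbb Q(\zeta_q)$, so $\langle q\rangle=(\mathfrak p_1\cdots\mathfrak p_g)^{q-1}$ with $g=\varphi(m)/f$ and inertia degree $f$; since $\langle 1-\zeta_q\rangle^{q-1}=\langle q\rangle$, unique factorization of ideals gives $\langle 1-\zeta_q\rangle=\mathfrak p_1\cdots\mathfrak p_g$, which is prime iff $g=1$, with residue field $\mathbb F_{q^{\varphi(m)}}$.
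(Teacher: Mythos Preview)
The paper does not supply its own proof of this lemma; it simply quotes the result from \cite{loukaki2025}. Your argument is correct and is the standard one: identify $\mathbb Z[\zeta_{mq}]\cong\mathbb Z[\zeta_m][y]/\bra{\Phi_q(y)}$ using $\gcd(m,q)=1$ and monicity of $\Phi_q$, set $y=1$ and use $\Phi_q(1)=q$ to get $\mathbb Z[\zeta_m]/q\mathbb Z[\zeta_m]\cong\mathbb F_q[x]/\bra{\overline{\Phi_m}(x)}$, and then invoke the factorization of $\Phi_m$ over $\mathbb F_q$ into $\varphi(m)/f$ irreducibles of degree $f=\mathrm{ord}_m(q)$ to exhibit the quotient as $(\mathbb F_{q^f})^{\varphi(m)/f}$, which is a domain precisely when $f=\varphi(m)$. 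The ramification-theoretic alternative you outline (total ramification of $q$ in $\mathbb Q(\zeta_q)$, unramified splitting in $\mathbb Q(\zeta_m)$, then multiplicativity in the compositum) is equally valid and is in fact closer to how such statements are usually phrased in algebraic number theory references; either route would serve as a complete proof here.
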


\begin{Lm}\cite[Lemma 4]{loukaki2025} \label{LmL2}
Let $p,q$ be distinct odd primes, then the image of $\zeta_p$ under the canonical epimorphism from $\mathbb Z[\zeta_{pq}]$ to $\mathbb Z[\zeta_{pq}]/\bra{1-\zeta^q}$ is still a primitive $p$-th root of unity.
\end{Lm}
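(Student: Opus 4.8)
The plan is to reduce the assertion to the single fact that $f(\zeta_p)\neq 1$ and then settle this by a norm computation in $\mathbb Q(\zeta_{pq})$. Let $f$ denote the canonical epimorphism and $R=\mathbb Z[\zeta_{pq}]/\bra{1-\zeta_q}$ its target. Since $\zeta_p$ is a unit of $\mathbb Z[\zeta_{pq}]$ with $\zeta_p^p=1$, its image $f(\zeta_p)$ is a unit of $R$ satisfying $f(\zeta_p)^p=1$, so the multiplicative order of $f(\zeta_p)$ divides $p$; as $p$ is prime this order is $1$ or $p$. Hence $f(\zeta_p)$ is a primitive $p$-th root of unity exactly when $f(\zeta_p)\neq 1$, that is, exactly when $1-\zeta_p\notin\bra{1-\zeta_q}$ in $\mathbb Z[\zeta_{pq}]$. (One may also note $1+f(\zeta_p)+\cdots+f(\zeta_p)^{p-1}=f\big(1+\zeta_p+\cdots+\zeta_p^{p-1}\big)=0$, so the image satisfies the full cyclotomic relation and not merely $x^p=1$.)

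To prove $1-\zeta_p\notin\bra{1-\zeta_q}$ I would argue by contradiction: suppose $1-\zeta_p=(1-\zeta_q)\beta$ for some $\beta\in\mathbb Z[\zeta_{pq}]$ and apply the field norm $N=N_{\mathbb Q(\zeta_{pq})/\mathbb Q}$. Multiplicativity gives $N(1-\zeta_p)=N(1-\zeta_q)\,N(\beta)$, where $N(\beta)\in\mathbb Z$ because $\beta$ is an algebraic integer. The two norms on the left are computed by the tower law together with the standard identity $N_{\mathbb Q(\zeta_r)/\mathbb Q}(1-\zeta_r)=\Phi_r(1)=r$ for a prime $r$, with $\Phi_r$ the $r$-th cyclotomic polynomial: since $[\mathbb Q(\zeta_{pq}):\mathbb Q(\zeta_p)]=\varphi(pq)/\varphi(p)=q-1$ one obtains $N(1-\zeta_p)=p^{q-1}$, and symmetrically $N(1-\zeta_q)=q^{p-1}$. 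Therefore $p^{q-1}=q^{p-1}N(\beta)$ with $N(\beta)\in\mathbb Z$, which is impossible: $p$ and $q$ are distinct primes, so $q$ divides $q^{p-1}$ but not $p^{q-1}$. This contradiction completes the argument.

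The proof is short, and the only points that need genuine care are bookkeeping ones: one must read the membership $1-\zeta_p\in\bra{1-\zeta_q}$ literally as a factorization $1-\zeta_p=(1-\zeta_q)\beta$ with $\beta$ an \emph{algebraic integer}, so that $N(\beta)$ is a rational integer (which is exactly what the final divisibility contradiction exploits), and one must use the correct relative degree $q-1$ in the tower law so the exponents come out as $p^{q-1}$ and $q^{p-1}$. Note that neither Chebotar\"ev's theorem nor the hypothesis that $q$ generates $\mathbb Z_p^*$ is needed here; those enter only when Lemma \ref{LmL1} and this lemma are fed into Theorem \ref{ThmMain}. An essentially equivalent alternative would be to first identify $R\cong\mathbb Z[\zeta_p]/q\mathbb Z[\zeta_p]$ (via $\mathbb Z[\zeta_{pq}]\cong\mathbb Z[\zeta_p][y]/\bra{\Phi_q(y)}$ and $\Phi_q(1)=q$) and then observe that $x^p-1$ stays separable modulo $q$ since $\gcd(p,q)=1$, so its root $f(\zeta_p)$ cannot collapse to $1$; I would keep the norm argument as the primary route, since it avoids having to justify that ring presentation.
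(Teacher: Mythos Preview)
The paper does not supply its own proof of this lemma; it simply quotes the statement from \cite{loukaki2025} and uses it as a black box in the proof of Corollary~\ref{CorPQ}. So there is nothing to compare against on the paper's side.

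Your argument is correct. The reduction to $1-\zeta_p\notin\bra{1-\zeta_q}$ is the right move (since $p$ is prime, order $1$ versus order $p$ is the only dichotomy), and the norm computation disposes of it cleanly: $N_{\mathbb Q(\zeta_{pq})/\mathbb Q}(1-\zeta_p)=p^{q-1}$ and $N_{\mathbb Q(\zeta_{pq})/\mathbb Q}(1-\zeta_q)=q^{p-1}$ via the tower formula and $\Phi_r(1)=r$, so a factorization $1-\zeta_p=(1-\zeta_q)\beta$ would force $q\mid p^{q-1}$. You are also right that the hypothesis ``$q$ generates $\mathbb Z_p^*$'' plays no role here; the norm argument goes through regardless of whether $\bra{1-\zeta_q}$ is prime, and that hypothesis is only needed later (via Lemma~\ref{LmL1}) to ensure the quotient is a field. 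The alternative route you sketch---identifying the quotient with $\mathbb Z[\zeta_p]/q\mathbb Z[\zeta_p]$ and invoking separability of $x^p-1$ modulo $q$---is closer in spirit to how such lemmas are often proved in the cyclotomic-field literature, but your norm argument is shorter and avoids the ring-presentation step, so keeping it as the primary route is a reasonable choice.
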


\begin{Lm}\cite[Theorem A]{zhang2019} \label{LmZ}
For every odd prime $p$, there is a constant $\Gamma_p$ such that if $q>\Gamma_p$ is another odd prime that generates $\mathbb Z_p^*$, then all minors of $F_p$ (interpreted as the Vandermonde matrix on $p$-th roots of unity in $\mathbb F_{q^{p-1}}$) are non-vanishing in $\mathbb F_{q^{p-1}}$.
\end{Lm}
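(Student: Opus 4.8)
The plan is to obtain this as a ``reduction modulo a large prime'' consequence of Chebotar\"ev's theorem in characteristic $0$, exploiting that for a fixed $p$ there are only finitely many minors to control. Fix the odd prime $p$. Every minor of $F_p$ is the determinant of a submatrix $F_p[\mathcal I,\mathcal J]$ with $|\mathcal I|=|\mathcal J|$; writing $x_a=\zeta_p^{\,a}$ for $a\in\mathcal I$, this determinant equals the generalized Vandermonde determinant $\det\big(x_a^{\,b}\big)_{a\in\mathcal I,\,b\in\mathcal J}$, which visibly lies in the ring of integers $\mathbb Z[\zeta_p]$ and, by Chebotar\"ev's theorem over $\mathbb C$, is nonzero. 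As $F_p$ is $p\times p$, the set of all its minors is finite; denote them $\alpha_1,\dots,\alpha_M\in\mathbb Z[\zeta_p]\setminus\{0\}$, and note both $M$ and this list depend only on $p$, never on $q$.

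Next I would set up the reduction at $q$. As $p\ne q$ we have $\gcd(p,q)=1$, so Lemma \ref{LmL1} with $m=p$ applies: since $q$ generates $\mathbb Z_p^*$, the ideal $\bra{1-\zeta_q}$ is prime in $\mathbb Z[\zeta_{pq}]$ and $\mathbb Z[\zeta_{pq}]/\bra{1-\zeta_q}\cong\mathbb F_{q^{\varphi(p)}}=\mathbb F_{q^{p-1}}$. Composing $\mathbb Z[\zeta_p]\hookrightarrow\mathbb Z[\zeta_{pq}]$ with this quotient map yields a ring homomorphism $\pi\colon\mathbb Z[\zeta_p]\to\mathbb F_{q^{p-1}}$, which is surjective because the image of $\zeta_q$ is $1$, so the target is generated over its prime field by $\pi(\zeta_p)$; by Lemma \ref{LmL2}, $\pi(\zeta_p)$ is a primitive $p$-th root of unity. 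Hence applying $\pi$ entrywise to a submatrix of $F_p$ over $\mathbb C$ produces exactly the corresponding submatrix of $F_p$ interpreted over $\mathbb F_{q^{p-1}}$, and since $\pi$ commutes with determinants, a minor of $F_p$ over $\mathbb F_{q^{p-1}}$ is zero if and only if the associated $\alpha_j$ lies in $\ker\pi$.

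It then remains to choose $\Gamma_p$ ruling this out for large $q$. As $\mathbb F_{q^{p-1}}$ has characteristic $q$ and $\pi$ is onto a nonzero ring, $\ker\pi$ is a proper ideal of $\mathbb Z[\zeta_p]$ with $\ker\pi\cap\mathbb Z=q\mathbb Z$. Write $N(\alpha)$ for the field norm $N_{\mathbb Q(\zeta_p)/\mathbb Q}(\alpha)$. Since $\mathbb Z[\zeta_p]$ is stable under $\mathrm{Gal}(\mathbb Q(\zeta_p)/\mathbb Q)$, for each $\alpha\in\mathbb Z[\zeta_p]$ we may write $N(\alpha)=\alpha\gamma$ with $\gamma\in\mathbb Z[\zeta_p]$ (the product of the non-identity conjugates of $\alpha$); hence $\alpha_j\in\ker\pi$ would force $N(\alpha_j)\in\ker\pi\cap\mathbb Z=q\mathbb Z$, i.e. $q$ divides the rational integer $N(\alpha_j)$, which is nonzero since $\alpha_j\ne0$. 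It therefore suffices to set
$$\Gamma_p=\max_{1\le j\le M}\big|N(\alpha_j)\big|,$$
a quantity depending on $p$ alone: for any odd prime $q>\Gamma_p$ generating $\mathbb Z_p^*$ one has $q\nmid N(\alpha_j)$ for all $j$, so no $\alpha_j$ lies in $\ker\pi$, and no minor of $F_p$ over $\mathbb F_{q^{p-1}}$ can vanish.

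I expect the main obstacle to be not the argument but pinning down the reduction picture --- confirming that $\mathbb Z[\zeta_p]$ is the full ring of integers of $\mathbb Q(\zeta_p)$, that the hypothesis ``$q$ generates $\mathbb Z_p^*$'' really produces residue field $\mathbb F_{q^{p-1}}$, and that $\pi$ sends the complex $\zeta_p$ to a genuine primitive $p$-th root of unity, so that it is literally ``the same'' Vandermonde matrix being reduced. These are standard cyclotomic facts (already underpinning Lemma \ref{LmL1} and Lemma \ref{LmL2}), but they are where one has to be careful. As a sanity check, characteristic $q$ introduces no spurious collapse at the extreme minors: the $1\times1$ minors are roots of unity (norm $\pm1$, hence never in $\ker\pi$), and the full $p\times p$ minor is $\pm\prod_{a<b}(\zeta_p^{\,a}-\zeta_p^{\,b})$, still nonzero in characteristic $q$ because $p\ne q$ keeps the $p$-th roots of unity distinct --- consistent with the norm criterion above. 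If one further wanted an \emph{effective} $\Gamma_p$ (as \cite{zhang2019} supplies), the remaining work would be to bound the $N(\alpha_j)$ explicitly, e.g. via the number and magnitude of the Schur-polynomial factors of the generalized Vandermonde determinants; for the existence statement as written, the finiteness observation already suffices.
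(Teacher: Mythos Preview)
Your argument is correct. The paper itself does not prove this lemma --- it is quoted from \cite{zhang2019} --- but the remark immediately following the statement sketches that source's method: one writes each minor via the Evans--Isaacs generalized Vandermonde formula \cite{evans1977} as an ordinary Vandermonde determinant times a Schur polynomial evaluated at the $p$-th roots of unity, and then checks the sum of Schur coefficients modulo $q$. Your route is genuinely different and more conceptual for the bare existence claim: you take Chebotar\"ev's theorem in characteristic zero as input to know each minor $\alpha_j\in\mathbb Z[\zeta_p]\setminus\{0\}$, then observe that its image under the reduction $\pi\colon\mathbb Z[\zeta_p]\to\mathbb F_{q^{p-1}}$ can vanish only if $q$ divides the nonzero rational integer $N_{\mathbb Q(\zeta_p)/\mathbb Q}(\alpha_j)$, and there are only finitely many such norms to bound. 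The Schur-polynomial approach has the advantage of not presupposing Chebotar\"ev's theorem and of leading more directly to an explicit value of $\Gamma_p$; your norm argument is shorter and makes the ``$q$ large enough'' mechanism transparent. You already anticipate exactly this trade-off in your final paragraph.
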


Remark: Roots of unity behave differently in finite fields compared to in extensions of $\mathbb Q$, for example $2$ is an $11$-th primitive root of unity in $\mathbb F_{23}$, but $2^4+2^2+2^1+2^0=23=0 \pmod {23}$, so it vanishes on a polynomial of order $4$ instead of order $10$. Therefore in contrast to what the Chebotar\"ev theorem asserted in $\mathbb C$, it is actually possible for $F_p$ to have vanishing minors in finite fields (see \cite[Example 4.1, Example 4.3]{zhang2019}). Moreover, in the setting of Lemma \ref{LmZ}, the $p$-th cyclotomic polynomial will actually decompose into linear factors in $\mathbb F_{q^{p-1}}$ (this can be derived by e.g., applying \cite[Theorem 59]{waldschmidt2014}) since $q^{p-1}=1 \pmod p$, which is why the statement of Lemma \ref{LmZ} is non-trivial. The proof in \cite[Theorem A]{zhang2019}, as well as methods in e.g., \cite{delvaux2008}, use the formula of generalized Vandermonde determinant in \cite{evans1977} and checks the sum of coefficients of the Schur function in that formula. 

\begin{Cor} \label{CorPQ}
Let $p,q$ be distinct odd primes, if $q$ generates $\mathbb Z_{p}^*$ and $q>\Gamma_p$ (whose value is defined in Lemma \ref{LmZ}), then $F_p\otimes F_q$ is principally non-singular.
\end{Cor}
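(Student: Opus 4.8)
The plan is to read this off Theorem~\ref{ThmMain} with the choices $m=p$, $n=q$, and $F=F_p$. Since every entry of $F_p$ is an integer power of $\zeta_p$, the standing hypothesis on $F$ in Theorem~\ref{ThmMain} is met, so the whole task reduces to checking conditions~\ref{Cond1} and~\ref{Cond2} for these choices. I expect the ``proof'' to be essentially an assembly of the preceding lemmas, with the only real care needed in keeping the parameter correspondences straight.

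For~\ref{Cond1}, I would invoke Lemma~\ref{LmL1} with its ``$m$'' taken to be our $p$: because $p,q$ are distinct odd primes we have $\gcd(p,q)=1$, and by hypothesis $q$ generates $\mathbb Z_p^*$, so the lemma yields that $\bra{1-\zeta_q}$ is prime in $\mathbb Z[\zeta_{pq}]$ and moreover $\mathbb Z[\zeta_{pq}]/\bra{1-\zeta_q}\cong\mathbb F_{q^{\varphi(p)}}=\mathbb F_{q^{p-1}}$. This is a field of characteristic $q$, which is trivially $\ge q=n$, so~\ref{Cond1} holds. Note this is also where the generation hypothesis ``$q$ generates $\mathbb Z_p^*$'' is first used, and it is the genuine content behind the primality of the ideal.

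For~\ref{Cond2}, let $f$ be the canonical epimorphism onto $D:=\mathbb Z[\zeta_{pq}]/\bra{1-\zeta_q}\cong\mathbb F_{q^{p-1}}$. By Lemma~\ref{LmL2}, $f(\zeta_p)$ is still a primitive $p$-th root of unity in $D$; hence $f(F_p)$ is exactly the Vandermonde matrix on the $p$-th roots of unity inside $\mathbb F_{q^{p-1}}$, i.e.\ on the powers of $f(\zeta_p)$. Since $q>\Gamma_p$ and $q$ generates $\mathbb Z_p^*$, Lemma~\ref{LmZ} applies and tells us that \emph{all} minors of this Vandermonde matrix are non-vanishing in $\mathbb F_{q^{p-1}}$; in particular every principal minor is non-zero, so $f(F_p)$ is principally non-singular over $D$, which is precisely~\ref{Cond2}. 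This step is where the size hypothesis $q>\Gamma_p$ is consumed.

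With both conditions verified, Theorem~\ref{ThmMain} gives at once that $F_p\otimes F_q$ is principally non-singular, finishing the argument. As anticipated, there is no hard analytic or combinatorial obstacle left in the corollary itself: all the depth has been pushed into Lemma~\ref{LmZ} (the finite-field analogue of Chebotarev's theorem for all minors) and Lemmas~\ref{LmL1}--\ref{LmL2}. The only thing one must not fumble is the bookkeeping --- the ``$m$'' of Lemma~\ref{LmL1} is the ``$p$'' here, the ``$n$'' of Theorem~\ref{ThmMain} is ``$q$'', and the Vandermonde in Lemma~\ref{LmZ} must be matched to $f(F_p)$ via Lemma~\ref{LmL2}.
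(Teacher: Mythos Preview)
Your proposal is correct and follows essentially the same route as the paper's own proof: apply Theorem~\ref{ThmMain} with $F=F_p$, $m=p$, $n=q$, verify \ref{Cond1} via Lemma~\ref{LmL1} and \ref{Cond2} via Lemmas~\ref{LmL2} and~\ref{LmZ}. If anything, you spell out the bookkeeping (the $\gcd$ condition, the characteristic check $q\ge n$, the identification of $f(F_p)$ with the Vandermonde matrix in $\mathbb F_{q^{p-1}}$) more explicitly than the paper does.
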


\begin{proof}
The statement is proved by applying Theorem \ref{ThmMain} on $F=F_p$ and $n=q$. Indeed, when we apply the canonical epimorphism $f: \mathbb Z[\zeta_{pq}]\to \mathbb Z[\zeta_{pq}]/\bra{1-\zeta_q}$, we see that \ref{Cond1} is satisfied by Lemma \ref{LmL1}, while \ref{Cond2} is satisfied by combining Lemma \ref{LmL2} (which indicates that $f(\zeta_p)$ remains a primitive $p$-th root of unity) and Lemma \ref{LmZ} (which shows that $f(F_p)$ is principally non-singular). Therefore by Theorem \ref{ThmMain}, $F_p\otimes F_q$ is principally non-singular. 
\end{proof}

\begin{Lm}\cite[Lemma 1]{frenkel2003} \label{LmF2}
If $q$ is a prime, then the ideal $\bra{1-\zeta_q}$ is prime in $\mathbb Z[\zeta_q]$, and $\mathbb Z[\zeta_q]/\bra{1-\zeta_q}\cong \mathbb F_q$.
\end{Lm}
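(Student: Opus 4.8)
The plan is to reduce the statement to an explicit computation with the $q$-th cyclotomic polynomial. First I would recall that $\Phi_q(x)=1+x+\cdots+x^{q-1}=(x^q-1)/(x-1)$ is irreducible over $\mathbb Q$ (Eisenstein at $q$ applied to $\Phi_q(x+1)$, whose constant term is $q$ and whose lower coefficients are the binomial coefficients $\binom{q}{j}$ with $1\le j\le q-1$, all divisible by $q$ but not $q^2$, together with Gauss's lemma). Hence $\Phi_q$ is the minimal polynomial of $\zeta_q$, and the evaluation homomorphism $\mathbb Z[x]\to\mathbb Z[\zeta_q]$, $x\mapsto\zeta_q$, is surjective with kernel exactly $\bra{\Phi_q(x)}$ (one more application of Gauss's lemma clears denominators). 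This yields the identification $\mathbb Z[\zeta_q]\cong\mathbb Z[x]/\bra{\Phi_q(x)}$, under which $\bra{1-\zeta_q}$ corresponds to the image of $\bra{1-x}$, so that $\mathbb Z[\zeta_q]/\bra{1-\zeta_q}\cong\mathbb Z[x]/\bra{\Phi_q(x),\,1-x}$.

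The key observation I would then use is that $x-1$ divides $f(x)-f(1)$ for every $f\in\mathbb Z[x]$, so $\Phi_q(x)\equiv\Phi_q(1)=q\pmod{1-x}$ and therefore $\bra{\Phi_q(x),\,1-x}=\bra{q,\,1-x}$ as ideals of $\mathbb Z[x]$. Consequently
$$\mathbb Z[\zeta_q]/\bra{1-\zeta_q}\cong\mathbb Z[x]/\bra{q,\,1-x}\cong(\mathbb Z/q\mathbb Z)[x]/\bra{1-x}\cong\mathbb Z/q\mathbb Z=\mathbb F_q,$$
where the final identification uses that $q$ is prime. Since $\mathbb F_q$ is a field, $\bra{1-\zeta_q}$ is a maximal ideal of $\mathbb Z[\zeta_q]$, in particular prime, which is precisely the assertion.

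I do not expect a genuine obstacle here: this is the standard fact that the rational prime $q$ is totally ramified in $\mathbb Q(\zeta_q)$ with $\bra{1-\zeta_q}$ the unique prime lying above it. The only points that need a little care are the two invocations of Gauss's lemma and the collapse of the iterated quotient $\mathbb Z[x]/\bra{q,1-x}$. If one prefers to argue intrinsically inside $\mathbb Z[\zeta_q]$, one can instead note that $q=\Phi_q(1)=\prod_{k=1}^{q-1}(1-\zeta_q^k)$ and that each factor $1-\zeta_q^k$ is an associate of $1-\zeta_q$ (picking $j$ with $kj\equiv1\pmod q$, the element $1-\zeta_q=1-(\zeta_q^k)^j$ is a $\mathbb Z[\zeta_q]$-multiple of $1-\zeta_q^k$, and symmetrically), whence $\bra{q}=\bra{1-\zeta_q}^{q-1}$; the reduction map $\mathbb Z\to\mathbb Z[\zeta_q]/\bra{1-\zeta_q}$ is surjective because $\zeta_q\equiv1$, its kernel contains $q$, and it does not contain $1$ since $1-\zeta_q$ is not a unit (its field norm is $\Phi_q(1)=q\neq\pm1$), so the quotient is again forced to be $\mathbb F_q$.
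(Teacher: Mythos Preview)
Your argument is correct and complete; this is the standard proof via the identification $\mathbb Z[\zeta_q]\cong\mathbb Z[x]/\bra{\Phi_q(x)}$ and the reduction $\Phi_q(x)\equiv\Phi_q(1)=q\pmod{1-x}$. The paper itself does not supply a proof of this lemma at all---it simply quotes the result from \cite{frenkel2003}---so there is nothing to compare against, and your write-up (including the alternative intrinsic argument via $q=\prod_{k=1}^{q-1}(1-\zeta_q^k)$ and associates) in fact provides more detail than the paper does.
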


\begin{Cor} \label{Cor2Q}
Let $F_2^{\otimes k}=\underbrace{F_2\otimes\ldots\otimes F_2}_{k \text{ terms}}$, if $q$ is an odd prime and $k\in\{1,2,3\}$, then $F_2^{\otimes k}\otimes F_q$ is principally non-singular upon permutation. In particular, for $k=1$ the identity permutation suffices.
\end{Cor}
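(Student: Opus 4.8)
The plan is to apply Theorem~\ref{ThmMain} with $n=q$ and with $F$ equal to a column permutation of $F_2^{\otimes k}$. Every entry of such an $F$ lies in $\{+1,-1\}=\{\zeta_2^0,\zeta_2^1\}$, so $F$ is an admissible input with $m=2$; moreover, since $q$ is odd, $-\zeta_q^{(q+1)/2}$ is a primitive $2q$-th root of unity and hence $\mathbb Z[\zeta_{2q}]=\mathbb Z[\zeta_q]$. If $P$ is a $2^k\times 2^k$ permutation matrix and $(F_2^{\otimes k}P)\otimes F_q$ is principally non-singular, then, writing $(F_2^{\otimes k}P)\otimes F_q=(F_2^{\otimes k}\otimes F_q)(P\otimes I_q)$ with $P\otimes I_q$ again a permutation matrix, we conclude that $F_2^{\otimes k}\otimes F_q$ is principally non-singular upon permutation. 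So it suffices to exhibit, for each $k\in\{1,2,3\}$, a permutation matrix $P$ ($P=I$ when $k=1$) for which the two conditions of Theorem~\ref{ThmMain} hold for $F=F_2^{\otimes k}P$.

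Condition~\ref{Cond1} costs nothing and does not depend on $k$ or $P$: by Lemma~\ref{LmF2} the ideal $\bra{1-\zeta_q}$ is prime in $\mathbb Z[\zeta_q]=\mathbb Z[\zeta_{2q}]$ and the quotient is $\mathbb F_q$, of characteristic $q\ge q$. For condition~\ref{Cond2}, observe that the canonical epimorphism $f$ restricted to $\mathbb Z\subseteq\mathbb Z[\zeta_{2q}]$ is reduction modulo $q$; since $q$ is odd, $+1$ and $-1$ remain distinct, so $f(F_2^{\otimes k}P)$ is just the integer $\pm1$ matrix $F_2^{\otimes k}P$ reduced modulo $q$, and each of its principal minors is the reduction modulo $q$ of the corresponding integer principal minor of $F_2^{\otimes k}P$. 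Thus condition~\ref{Cond2} holds provided $P$ is chosen so that no principal minor of the integer matrix $F_2^{\otimes k}P$ is divisible by $q$; in particular it holds for \emph{every} odd $q$ at once if every principal minor of $F_2^{\otimes k}P$ has absolute value a power of $2$.

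For $k=1$, take $P=I$: the principal minors of $F_2$ are $1,-1,-2$, so the claim follows and the identity permutation suffices. For $k=2$, take $P$ to be the transposition exchanging the last two columns of $F_2^{\otimes 2}$; one checks directly that none of its $2^4-1$ principal minors vanishes, and since any $\ell\times\ell$ minor of a $\pm1$ matrix is divisible by $2^{\ell-1}$ and bounded in modulus by $\ell^{\ell/2}$, the non-zero ones with $\ell\le 4$ automatically lie in $\{\pm1,\pm2,\pm4,\pm8,\pm16\}$. In both cases Theorem~\ref{ThmMain} delivers the conclusion for every odd prime $q$.

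The case $k=3$ is where the real work (and the only obstacle) sits. Now $F_2^{\otimes 3}$ is $8\times 8$, and for $5\le \ell\le 7$ an $\ell\times\ell$ minor of a $\pm1$ matrix need no longer be a power of $2$ (already $\pm48=\pm2^4\cdot3$ occurs at $\ell=5$), so the ``power of $2$'' shortcut fails and a suitable $P$ must be produced and verified. I would first search the column orderings of $F_2^{\otimes 3}$ for one all of whose $2^8-1$ principal minors are $\pm$ a power of $2$; such a $P$, if it exists, settles all odd primes $q$ simultaneously through Theorem~\ref{ThmMain}. Failing that, I would argue $q$-by-$q$: the $2^{\ell-1}$-divisibility together with the known maximal $\pm1$ determinants $48,160,576$ of sizes $5,6,7$ confine the odd primes that can divide any principal minor of a fixed $F_2^{\otimes 3}P$ to $\{3,5,7\}$, so at most three small primes need individual attention, and for each of them one searches for a permutation making $F_2^{\otimes 3}P$ principally non-singular over $\mathbb F_q$. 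Either way the argument is Theorem~\ref{ThmMain} plus Lemma~\ref{LmF2}, with the $k=3$ step reduced to a finite, computer-assisted check that the needed permutations exist.
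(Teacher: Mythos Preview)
Your approach is essentially the same as the paper's: apply Theorem~\ref{ThmMain} with $F=F_2^{\otimes k}P$ and $n=q$, verify condition~\ref{Cond1} via Lemma~\ref{LmF2} and the identity $\mathbb Z[\zeta_{2q}]=\mathbb Z[\zeta_q]$, reduce condition~\ref{Cond2} to the integer principal minors of $F_2^{\otimes k}P$ being powers of~$2$, and push the permutation through via $(F_2^{\otimes k}P)\otimes F_q=(F_2^{\otimes k}\otimes F_q)(P\otimes I_q)$.

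There is one mild difference worth noting. For $k=2$ you give an explicit permutation (swap the last two columns) together with the clean observation that any nonzero $\ell\times\ell$ minor of a $\pm1$ matrix with $\ell\le 4$ is forced to be $\pm$ a power of~$2$ by the $2^{\ell-1}$-divisibility and the Hadamard bound; the paper instead simply invokes the exhaustive computer search for both $k=2$ and $k=3$. Your argument is a small but genuine improvement for $k=2$, since it eliminates the need for computer verification there. For $k=3$ you correctly identify that this shortcut fails (minors like $\pm48$ appear) and propose the same computer search the paper actually carries out; the paper reports that $64$ permutations work, so your first plan succeeds and the $q$-by-$q$ fallback is unnecessary. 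The only thing missing from your write-up is the confirmation that such a $P$ exists for $k=3$, which the paper supplies via the appendix code.
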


\begin{proof}
We will prove the statement by applying Theorem \ref{ThmMain} with $F=F_2^{\otimes k}P$ and $n=q$ for some appropriate permutation matrix $P$. Notice also that in this setting we have $m=2$.

Since $q$ is odd we get $\zeta_{2q}=-\zeta_q^2$, which indicates that $\mathbb Z(\zeta_{2q})=\mathbb Z(\zeta_{q})$. Thus applying Lemma \ref{LmF2} we see that \ref{Cond1} holds for all $k\in\mathbb N$.

Every entry in $F_2^{\otimes k}$ is either $1$ or $-1$, which is still $\pm 1$ under the canonical epimorphism from $\mathbb Z[\zeta_q]$ to $\mathbb Z[\zeta_q]/\bra{1-\zeta_q}$. A principal minor will only vanish if it is divisible by $q$. Therefore if there is a permutation $P$ such that every principal minor in $F_2^{\otimes k}P$ is a power of $2$, then \ref{Cond2} will be true for all $q$.
 
If $k=1$, then this trivially holds with $P$ being the identity permutation $I$. For $k=2,3$ the corresponding permutations and the principal non-singularities after permutations are produced and verified by an exhaustive search using the Python program in the appendix. There are $8$ such permutations for $k=2$ and $64$ such permutations for $k=3$.

Now as both conditions are satisfied, we can conclude by Theorem \ref{ThmMain} that $F\otimes F_q$ is principally non-singular. Then using the identity (see e.g., \cite[Lemma 4.2.10]{horn1991}) that $(A\otimes B)(C\otimes D)=(AC)\otimes (BD)$ when the multiplications $AC,BD$ are well defined, and denote by $I$ the identity permutation on $\mathbb C^{q\times q}$ we get
$$F\otimes F_q=(F_2^{\otimes k}P)\otimes (F_qI)=(F_2^{\otimes k}\otimes F_q)(P\otimes I).$$
Since $P\otimes I$ is again a permutation matrix, this indicates that $F_2^{\otimes k}\otimes F_q$ is principally non-singular upon permutation.
\end{proof}

It is worth mentioning that $F_2^{\otimes k}$ is a real Hadamard matrix (of Sylvester type), and therefore a representation of the $k$-th order Walsh system. Walsh functions are piecewise constant functions on the unit interval that takes value $\pm 1$, they form an orthonormal basis of $L^2([0,1])$. The $k$-th order Walsh system is a subset of Walsh functions consists of $2^k$ mutually orthogonal elements, each of which is constant on the intervals $[j/2^k, (j+1)/2^k)$ with $j=0,1,\ldots,2^k-1$. By a proper ordering they correspond to a real Hadamard matrix with the $i,j$-th entry of the matrix being the value of the $i$-th function on the interval $[j/2^k, (j+1)/2^k)$. The principal non-singularity upon permutation indicates that we can restrict these functions to any subcollections of dyadic intervals, and there will still be a subsystem that remains linearly independent. It is also tempting to ask whether the principal non-singularity upon permutation in all finite fields is a property also possessed by other real Hadamard matrices.

\section*{Appendix}
The following Python 3 code (prepared with Python 3.9.2, Scipy 1.6.0, Numpy 1.19.5. On a Raspberry Pi 400 microcomputer it runs for about 10 seconds) is used to verify that $F_2^{\otimes 2}$ and $F_2^{\otimes 3}$ are principally non-singular upon permutation in finite fields. It merely generates all possible column permutations and for each permutation looks through all principal minors. The largest principal submatrices are those $7\times 7$ submatrices in $F_2^{\otimes 3}$, their determinants will not exceed $7!=5040<8192=2^{13}$ in absolute value (indeed, the determinant of an $n\times n$ matrix is a multivariate polynomial consists of precisely $n!$ terms, each term is a product of matrix entries. Since all entries in the matrices here are $\pm 1$, the determinant must be bounded by $n!$ in absolute value), thus for each principal minor we only need to check whether its absolute value is in the set $\{2^k: k=0,1,\ldots,13\}$.

\begin{Verbatim}[frame=lines]
import scipy.linalg
import itertools,numpy,math

bound=3

bPowers={1}
x=1
while x<=math.factorial((1<<bound)-1):
    x*=2
    bPowers.add(x)

F2=((1,1),(1,-1))
F=[[1,1],[1,-1]]
n=2
for k in range(2,bound+1):
    F=numpy.kron(F,F2)
    n*=2
    res=[]
    for perm in itertools.permutations(range(n)):
        isSingular=False
        for m in range(2,n):
            for comb in itertools.combinations(range(n),m):
                I,J=numpy.ix_(list(comb),[perm[x] for x in comb])
                d=round(abs(scipy.linalg.det(F[I,J])))
                if d not in bPowers:
                    isSingular=True
                    break
            if isSingular:
                break
        if not isSingular:
            res.append(perm)
    print(f"There are {len(res)} valid permutations for k={k}:")
    *map(print,res),
         
print("Job Completed.")
\end{Verbatim}

{\small
}


\begin{thebibliography}{10}
\bibitem{barnett2022} A.Barnett: How exponentially ill-conditioned are contiguous submatrices of the Fourier matrix? Siam Review, 64(1), 105-131, 2022.
\bibitem{cabrelli2025} C.Cabrelli, U.Molter, F.Negreira: Weaving Riesz bases. Journal of Fourier Analysis and Applications, 31, 4, 2025.
\bibitem{caragea2025} A.Caragea, D.G.Lee: On the principal minors of Fourier matrices. arXiv:2409.09793, 2025.
\bibitem{cohen2007} H.Cohen: Number Theory Vol I: Tools and Diophantine Equations. New York: Springer, 2007.
\bibitem{delvaux2008} S.Delvaux, M.V.Barel: Rank-deficient submatrices of Fourier matrices. Linear Algebra and Its Applications. 429, 1587-1605, 2008.
\bibitem{donoho1989} D.Donoho, P.Stark: Uncertainty principles and signal recovery. SIAM Journal of Applied Mathematics, 49, 906-931, 1989.
\bibitem{evans1977} R.Evans, I.Isaacs: Generalized Vandermonde determinants and roots of unity of prime order. Proceedings of the American Mathematical Society, 58, 51-54, 1977.
\bibitem{frenkel2003} P.Frenkel: Simple proof of Chebotar\"ev's theorem on roots of unity. arXiv:0312398, 2004.
\bibitem{fuglede1974} B.Fuglede: Commuting self-adjoint partial differential operators and a group theoretic problem. Journal of Functional Analysis, 16, 101-121, 1974.
\bibitem{horn1991} R.Horn: Topics in Matrix Analysis. Cambridge: Cambridge University Press, 1991.
\bibitem{hungerford1974} T.Hungerford: Algebra. New York: Springer, 1974.
\bibitem{kozma2015} G.Kozma, S.Nitzan: Combining Riesz bases. Inventiones Mathematica, 199, 267-285, 2015.
\bibitem{loukaki2025} M.Loukaki: Chebotar\"ev's theorem for cyclic groups of order $pq$ and an uncertainty principle, Bulletin of the London Mathematical Society, 57, 12, 3841-3856, 2025.
\bibitem{meshulam2006} R.Meshulam: An uncertainty inequality for finite abelian groups. European Journal of Combinatorics, 27(1), 63–67, 2006.
\bibitem{pfander2024} G.Pfander, S.Revay, D.Walnut: Exponential bases for partitions of intervals. Applied and Computational Harmonic Analysis, 68, 101607, 2024.
\bibitem{smith1990} K.Smith: The uncertainty principle on groups. SIAM Journal of Applied Mathematics, 50, 876-882,1990.
\bibitem{stevenhagen1996} P.Stevenhagen, H.Lenstra: Chebotar\"ev and his density theorem. The Mathematical Intelligencer, 18, 26-37, 1996.
\bibitem{tao2005} T.Tao: An uncertainty principal for cyclic groups of prime order. Mathematical Research Letters, 12, 121-127, 2005. 
\bibitem{waldschmidt2014} M.Waldschmidt: Course: Introduction to the theory of finite fields with application to error correcting codes. https://webusers.imj-prg.fr/$\sim$michel.waldschmidt/articles/pdf/FiniteFields.pdf, 2014.
\bibitem{zhang2019} G.Zhang: On the Chebotar\"ev theorem over finite fields. Finite Fields and Their Applications, 56, 97-108, 2019. 
\end{thebibliography}
\end{document}